\theoremstyle{plain}
\newtheorem{theorem}{Theorem}[section]
\newtheorem{lemma}[theorem]{Lemma}
\newtheorem{conjecture}[theorem]{Conjecture}
\theoremstyle{definition}
\newtheorem{definition}[theorem]{Definition}
\theoremstyle{remark}
\newtheorem{example}[theorem]{Example}
\newtheorem{remark}[theorem]{Remark}
\newcommand{\re}{\mathbb R}
\newcommand{\ca}{\mathcal A}
\newcommand{\cm}{\mathcal M}
\newcommand{\cp}{\mathcal P}
\newcommand{\cq}{\mathcal Q}
\newcommand{\cC}{\mathcal C}
\newcommand{\ci}{\mathcal I}
\newcommand{\cd}{\mathcal D}
\newcommand{\mR}{\mathbb{R}}
\title{Fair representation in the intersection of two matroids}
\author{Ron Aharoni}
\address[Ron Aharoni]{Department of Mathematics\\ Technion, Haifa, Israel}
\email{raharoni@gmail.com}
\author{Eli Berger}
\address[Eli Berger]{Department of Mathematics\\ Haifa University, Haifa, Israel}
\email{berger.haifa@gmail.com}
\author{Dani Kotlar}
\address[Dani Kotlar]{Department of Computer Science\\ Tel-Hai College, Upper Galilee, Israel}
\email{dannykotlar@gmail.com}
\author{Ran Ziv}
\address[Ran Ziv]{Department of Computer Science\\ Tel-Hai College, Upper Galilee, Israel}
\email{ranzivziv@gmail.com}
\begin{document}
\maketitle

\begin{abstract}
For a simplicial complex $\cC$ denote by $\beta(\cC)$ the minimal number of edges from $\cC$ needed to cover the ground set. If $\cC$ is a matroid then for every partition $A_1, \ldots, A_m$ of the ground set there exists a set $S \in \cC$ meeting each $A_i$ in at least $\frac{|A_i|}{\beta(\cC)}$ elements. We conjecture that a slightly weaker result is true for the intersection of two matroids: if $\cd=\cp \cap \cq$, where $\cp,\cq$ are matroids on the same ground set $V$ and $\beta(\cp), \beta(\cq) \le k$,  then for every partition $A_1, \ldots, A_m$ of the ground set there exists a set $S \in \cd$ meeting each $A_i$ in at least $\frac{1}{k}|A_i|-1$ elements. We prove that if $m=2$ (meaning that the  partition is into two sets) there is a set meeting each $A_i$ in at least   $(\frac{1}{k}-\frac{1}{|V|})|A_i|-1$ elements.

\end{abstract}

\baselineskip 18pt

\section{Terminology and main theme}

A hypergraph $\cC$ is called a {\em simplicial complex} (or just a ``complex'') if it is  closed
down, namely $e \in \cC$ and $f \subseteq e$ imply $f \in \cC$. A complex is called a {\em matroid} if all its maximal sets are of the same size, and this is true also for each of its induced hypergraphs. We denote by $rank(\cC)$  the maximal size of an edge in $\cC$.
 Also  define $\zeta(\cC)=\max_{S \subseteq V(\cC)}  \frac{|S|}{rank(\cC[S])} $ (here $\cC[S]$ is the set of edges of $\cC$ contained in $S$).
\begin{definition}
An {\em edge-cover} (or plainly a {\em cover}) of a complex $\cC$ is a collection of edges whose union is $V(\cC)$. The minimal size of an edge cover is denoted by  $\beta(\cC)$, and is called the {\em edge covering number} of $\cC$. A {\em fractional cover} of the vertices of a hypergraph $\cC$ by edges is a function $f:E(\cC) \to \mR^+$ satisfying $\sum_{v\in A \in \cC}f(A) \ge 1$ for all $v\in V(\cC)$.
Let $\beta^*(\cC)=\min \sum_{A \in \cC} f(A)$, where $f$ ranges over all fractional covers of vertices by edges in  $\cC$.

\end{definition}

\begin{remark}
A  common notation  for $\beta(\cC)$ is $\rho(\cC)$, but this may generate
confusion with the notation for the rank of a matroid.
\end{remark}

Clearly, $\beta(\cC)\ge \zeta(\cC)$, and a well known theorem of Edmonds \cite{edmondsdec} is:
\begin{theorem}\label{edmonds}
If $\cp$ is a matroid then    $\beta(\cp)=\lceil\zeta(\cp)\rceil$.
\end{theorem}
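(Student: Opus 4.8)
The plan is to derive the identity from the matroid union theorem. One inequality, $\beta(\cp)\ge\lceil\zeta(\cp)\rceil$, is the easy half and amounts to the observation preceding the statement: if $e_1,\dots,e_t$ is a cover of $\cp$ with $t=\beta(\cp)$, then for every $S\subseteq V$ the sets $e_i\cap S$ lie in $\cp[S]$ (a matroid is closed down) and cover $S$, so $|S|\le\sum_{i}|e_i\cap S|\le t\cdot rank(\cp[S])$. Taking the maximum over $S$ gives $\beta(\cp)=t\ge\zeta(\cp)$, and since $\beta(\cp)$ is an integer, $\beta(\cp)\ge\lceil\zeta(\cp)\rceil$.

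For the reverse inequality, put $k=\lceil\zeta(\cp)\rceil$ and form the $k$-fold matroid union $\cp^{(k)}$, whose edges are exactly the sets $I_1\cup\dots\cup I_k$ with each $I_j\in\cp$. By the matroid union theorem of Nash-Williams and Edmonds, $\cp^{(k)}$ is again a matroid, and its rank function is
\[
 r_{\cp^{(k)}}(S)=\min_{T\subseteq S}\bigl(|S\setminus T|+k\cdot rank(\cp[T])\bigr).
\]
Now $\beta(\cp)\le k$ holds precisely when $V$ is a union of $k$ edges of $\cp$, i.e.\ when $V$ is an edge of $\cp^{(k)}$, i.e.\ when $r_{\cp^{(k)}}(V)=|V|$. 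Unwinding the formula, this last condition reads $|V\setminus T|+k\cdot rank(\cp[T])\ge|V|$ for every $T\subseteq V$, equivalently $k\cdot rank(\cp[T])\ge|T|$ for every $T$, equivalently $k\ge\zeta(\cp)$. Since $k=\lceil\zeta(\cp)\rceil\ge\zeta(\cp)$ this holds, so $\beta(\cp)\le k=\lceil\zeta(\cp)\rceil$; combined with the first half, equality follows.

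The one substantive ingredient is the matroid union theorem, which I would quote as a black box — it is entirely standard, and moreover the result being proved is itself attributed to Edmonds. If a self-contained argument is preferred, the rank formula above can be reproved by the usual augmenting-path/closure argument: choose pairwise disjoint $I_1,\dots,I_k\in\cp$ whose union $U$ has maximum size, suppose $U\ne V$ and pick $v\in V\setminus U$; a standard alternating-exchange analysis in the exchange graph yields a set $T\ni v$ for which $I_j\cap T$ spans $T$ in $\cp$ for every $j$, whence $|T|\ge 1+\sum_{j}|I_j\cap T|=1+k\cdot rank(\cp[T])>k\cdot rank(\cp[T])$, contradicting $k\ge\zeta(\cp)$. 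I expect the only delicate point to be the bookkeeping that shows $\operatorname{cl}_{\cp}(I_j\cap T)\supseteq T$ for each $j$; invoking matroid union avoids it completely.
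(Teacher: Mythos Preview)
Your argument is correct and is in fact the standard derivation of Edmonds' covering theorem from the matroid union (Nash--Williams/Edmonds) rank formula. One tiny caveat: the step ``$k\cdot rank(\cp[T])\ge|T|$ for every $T$'' $\Leftrightarrow$ ``$k\ge\zeta(\cp)$'' tacitly assumes $\cp$ is loopless (otherwise both $\zeta(\cp)$ and $\beta(\cp)$ are infinite and the statement is vacuous); this is implicit in the paper's definition of $\zeta$ anyway.

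As for comparison: the paper does not prove this theorem at all. It is stated as ``a well known theorem of Edmonds'' with a citation to \cite{edmondsdec} and used as background. So there is no paper proof to compare against; your write-up would serve as a self-contained justification of a result the authors simply quote.
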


The   objects studied in this paper are complexes that are the intersection of two matroids, where $\cC$  being the intersection of the matroids $\cp$ and $\cq$ means that $A \in \cC$ if and only if $A \in \cp$ and $A \in \cq$. For brevity, we call such a complex a {\em dimatroid}.

A well known  example of a dimatroid is that of the matching complex $\cm(G)$ of a bipartite graph $G$.  The ground set of this matroid is $E(G)$, and the two respective matroids are partition matroids,
one whose parts are the stars in one side of the graph, and the other having as parts the stars in the other side of the graph.
In this case a mysterious phenomenon occurs: the intersection does not affect $\beta$. Although the intersection of the two matroids is significantly poorer than each of them, its edge-covering number  is the maximum of their edge covering numbers. This is the content of
 K\"onig's  famous edge coloring theorem.
\begin{theorem}\label{konigc}
If $G$ is bipartite then  $\beta(\cm(G))=\Delta(G)$.
 \end{theorem}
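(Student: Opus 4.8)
The plan is to prove the two inequalities separately. The bound $\beta(\cm(G)) \ge \Delta(G)$ is immediate: the $\Delta(G)$ edges meeting a fixed vertex of maximum degree are pairwise intersecting, so no single matching contains two of them, and hence a cover of $E(G)$ by matchings must use at least $\Delta(G)$ of them. The substance lies in the reverse inequality $\beta(\cm(G)) \le \Delta(G)$; I would prove the stronger statement that $E(G)$ partitions into $\Delta := \Delta(G)$ matchings, by reducing to the classical fact that a $\Delta$-regular bipartite (multi)graph decomposes into $\Delta$ perfect matchings.

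The first step is the reduction. Writing $L,R$ for the sides of $G$, I would pad the smaller side with isolated vertices so that $|L|=|R|=n$, and then enlarge $G$ to a $\Delta$-regular bipartite multigraph $H$ on the same vertex set: as long as the current graph is not $\Delta$-regular, the sum of its left degrees equals the sum of its right degrees, so some $u\in L$ and some $v\in R$ both have degree $<\Delta$, and one adds a (possibly parallel) edge $uv$. This strictly raises the total degree, so the process terminates at a $\Delta$-regular bipartite multigraph $H\supseteq G$.

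The second step is to decompose $H$ into $\Delta$ perfect matchings, which by induction reduces to showing that a nonempty $d$-regular bipartite multigraph with sides of equal size has a perfect matching. This follows from Hall's theorem: for $S\subseteq L$ the $d|S|$ edges leaving $S$ all have their other endpoint in $N(S)$, which is incident to only $d|N(S)|$ edge-ends, so $|N(S)|\ge|S|$. Deleting such a matching leaves a $(d-1)$-regular bipartite multigraph, and iterating $\Delta$ times partitions $E(H)$ into perfect matchings $M_1,\dots,M_\Delta$; then $M_1\cap E(G),\dots,M_\Delta\cap E(G)$ are members of $\cm(G)$ covering $E(G)$, which gives $\beta(\cm(G))\le\Delta$. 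Since this is a classical result there is no genuine obstacle; the only places needing care are checking that the regularization step always applies (the degree-sum balance, and the need to allow parallel edges) and verifying Hall's condition in the multigraph setting. An alternative, multigraph-free route is induction on $|E(G)|$: delete an edge $uv$, properly $\Delta$-colour $G-uv$, and exploit the fact that in a bipartite graph the subgraph spanned by any two colour classes is a disjoint union of paths and even cycles to recolour an alternating path and free a common colour at $u$ and $v$.
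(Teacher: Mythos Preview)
Your argument is correct and is the standard classical proof of K\"onig's edge-colouring theorem: the lower bound is trivial, and for the upper bound you embed $G$ into a $\Delta$-regular bipartite multigraph and peel off perfect matchings via Hall's theorem. The alternative Kempe-chain argument you sketch at the end is also fine.

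There is nothing to compare against, however: the paper does not prove this statement at all. It is quoted as ``K\"onig's famous edge coloring theorem'' and used only as motivation, to illustrate that the edge-covering number of a dimatroid can equal the maximum of the edge-covering numbers of its two constituent matroids. So your proposal supplies a proof where the paper simply cites the result.
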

 Note that $\Delta(G)$, the maximal degree of a vertex of $G$, is $\max(\beta(\cp), \beta(\cq))$, where $\cp, \cq$ are the two matroids whose intersection is $\cm(G)$. This is but one case of a probably general phenomenon: that dimatroids behave particularly well with respect to edge covers, and to representation (a notion to be expanded below).

\begin{conjecture}\cite{matcomp}\label{betaint}
If $\cp, \cq$ are matroids on the same vertex set then $$\beta(\cp \cap \cq) \le \max(\beta(\cp), \beta(\cq))+1.$$
\end{conjecture}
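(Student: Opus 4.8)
The plan is to reduce, by a minimal–counterexample argument, to a ``core'' case, and then to attack the core case by rounding a fractional cover. So suppose $\cp,\cq$ are matroids on a ground set $V$ of least size with $k:=\beta(\cp)\ge\beta(\cq)$ but $\beta(\cp\cap\cq)\ge k+2$; by Theorem~\ref{edmonds} this means $\zeta(\cp)\le k$ and $\zeta(\cq)\le k$, i.e.\ $|S|\le k\,r_\cp(S)$ and $|S|\le k\,r_\cq(S)$ for all $S\subseteq V$. First I would clear the degenerate structure: there are no loops (a loop cannot be covered at all), and I would try to contract parallel classes of either matroid and delete coloops — if $v$ is a coloop of $\cp$, cover $V-v$ by $k+1$ sets of $(\cp\setminus v)\cap(\cq\setminus v)$ using minimality (note $\beta(\cp\setminus v),\beta(\cq\setminus v)\le k$), and then try to insert $v$ into a class to which it can be added keeping $\cq$-independence, using that a coloop is freely addable on the $\cp$-side. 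The case where $\cp$ and $\cq$ are both partition matroids is exactly K\"onig's edge colouring theorem (Theorem~\ref{konigc}); it is the base case the reduction should land on, and — crucially — it also points to the mechanism for the core case, namely repairing an almost-cover by exchange (augmenting) paths.

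In the core case I would first nail down the \emph{fractional} version: $\beta^*(\cp\cap\cq)=\max(\beta^*(\cp),\beta^*(\cq))\le k$. The inequality $\ge$ is trivial, since the fractional covers of the subcomplex $\cp\cap\cq$ form a subfamily of those of $\cp$ (and of $\cq$). For $\le$, by LP duality it is enough to exhibit, starting from optimal fractional covers of $\cp$ and of $\cq$ (each put, via Edmonds, into a ``$k$ bases'' normal form), a redistribution of their weight onto sets that are simultaneously $\cp$- and $\cq$-independent with total weight not exceeding $k$; I expect this ``easy direction'' to go through, possibly with a uniformisation trick over bases. Finally $\beta^*(\cp),\beta^*(\cq)\le k$ since $\beta^*\le\beta$, which closes the fractional bound.

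The crux — and the reason Conjecture~\ref{betaint} is genuinely hard — is rounding the fractional cover of value $\le k$ up to an \emph{integral} cover of size $k+1$ using only the dimatroid structure, since for arbitrary set systems the $\beta/\beta^*$ gap is unbounded. I see three routes. (i) Exchange paths generalising the proof of K\"onig's theorem: start from a partition of $V$ into $k+1$ classes each $\cp$-independent (possible since $\beta(\cp)\le k$), and repair each class that contains a $\cq$-circuit by symmetric-difference moves along paths in the exchange graph of $\cq$, arguing that the move can always be chosen so as not to create a new $\cp$-circuit and that a potential (total excess over $\cq$-rank) strictly drops. (ii) Iterative rounding and uncrossing on the dimatroid covering polytope, exploiting that the inequalities tight at a vertex come from only two matroids and hence have a two-laminar structure amenable to a counting argument. (iii) A topological connectivity bound for the complex $\cp\cap\cq$ together with a Hall-type theorem for disjoint representatives, in the spirit of the authors' other work. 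I would bet on a hybrid of (i) and (iii); the main obstacle is exactly the absence of a clean min--max for covering by common independent sets, so every exchange performed on the $\cp$-side must be kept under control on the $\cq$-side, and taming that two-sided interaction is presumably what forces the one extra set the conjecture permits.
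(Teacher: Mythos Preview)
The statement you are attempting is Conjecture~\ref{betaint}, which the paper presents as an \emph{open problem}; there is no proof in the paper to compare against. The paper only records partial progress: the bound $\beta(\cp\cap\cq)\le 2\max(\beta(\cp),\beta(\cq))$ from~\cite{matcomp}, the case $\beta(\cp)=\beta(\cq)=2$ from~\cite{abz}, and the fractional version (Theorem~\ref{fracbeta}), which is exactly the identity $\beta^*(\cp\cap\cq)=\max(\zeta(\cp),\zeta(\cq))$ that you sketch in your second paragraph.

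Your write-up is an honest research plan rather than a proof, and you say so yourself: the rounding step from $\beta^*\le k$ to $\beta\le k+1$ is identified as ``the crux'' and left to one of three speculative routes. That is precisely the gap that keeps the conjecture open. A couple of specific points where even the preliminary reductions are not airtight: in the coloop argument, a $\cp$-coloop $v$ is freely addable on the $\cp$-side, but there is no guarantee that among the $k+1$ classes covering $V-v$ one of them fails to span $v$ in $\cq$; and in route~(i), starting from $k+1$ classes that are each $\cp$-independent is already stronger than $\beta(\cp)\le k$ gives you (it gives $k$ classes, not $k+1$, though of course $k$ classes can be split). More substantively, the exchange-path idea in~(i) runs into the well-known difficulty that a $\cq$-exchange can create a $\cp$-circuit and vice versa, and no potential function is exhibited that provably decreases. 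So as it stands this is a reasonable map of the territory, but not a proof, and the paper offers none either.
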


In fact, we do not have a counterexample to the stronger $\beta(\cp \cap \cq) \le \max(\beta(\cp), \beta(\cq)+1).$
In \cite{ak}  it was shown that
Conjecture \ref{betaint}, if true, is sharp for all values of $\max(\beta(\cp), \beta(\cq))$. A particularly simple example is the intersection of the graphic matroid $\cp$ on  $E(K_4)$ and the partition matroid $\cq$ whose three parts are the three perfect matchings in $K_4$, in which  $\beta(\cp)=\beta(\cq)=2$, while $\beta(\cp \cap \cq)=3$.
In \cite{matcomp} it was proved (using topology) that $\beta(\cp \cap \cq) \le 2\max(\beta(\cp), \beta(\cq))$.
 In \cite{abz} the conjecture was proved when $\beta(\cp)=\beta(\cq)=2$. In \cite{KZ05} it was conjectured that if no element of the ground set has $k+1$ disjoint spanning sets in either $\cp$ or $\cq$, then $\beta(\cp\cap\cq)\le k$. The conjecture was proved there for $k=2$.

 A possible strengthening  of Conjecture \ref{betaint} is that $\beta(\cp \cap \cq) \le \zeta(\cp \cap \cq)+1$.

Conjecture \ref{betaint} is reminiscent of another famous decomposition theorem - Vizing's theorem, stating that in any graph $G$ there holds
$\beta(\cm(G))\le \Delta(G)+1$ - the mysterious price of $1$ appearing yet again. The matching complex of a general graph is not a dimatroid, but something very close to it - a $2$-polymatroid (see \cite{lovaszplummer} for a definition). Clearly, in a graph $\zeta(\ci(G)) \ge \Delta(G)$, and it is an interesting question whether the weaker version of Vizing's theorem, $\beta(\ci(G)) \le \zeta(\ci(G))+1$, has a simple proof. Note that if we take a multigraph instead of a graph, we get the famous Seymour-Goldberg Conjecture \cite{seymour,goldberg}, stating that if $G$ is a multigraph then $\beta(\cm(G)) \le \beta^*(\cm(G)) +1$.

The last remark  leads to the following fractional  version  of Conjecture \ref{betaint}, that we shall use as a main tool:

\begin{theorem}\label{fracbeta}
If $\cp$ and $\cq$ are matroids on the same vertex set then $$\beta^*(\cp \cap \cq) = \max(\zeta(\cp), \zeta(\cq)).$$
\end{theorem}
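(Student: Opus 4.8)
\medskip

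The plan is to move to the polyhedral picture and read the statement off Edmonds' description of the matroid intersection polytope. For an arbitrary complex $\cC$ on ground set $V$, write $\mathbf{1}_A\in\re^V$ for the incidence vector of an edge $A\in\cC$, $\mathbf{1}_V$ for the all-ones vector, and $K(\cC)=\mathrm{conv}\{\mathbf{1}_A : A\in\cC\}$. First I would observe that
\[
\beta^*(\cC)=\min\{\,t>0 : \tfrac1t\mathbf{1}_V\in K(\cC)\,\},
\]
with the convention that the minimum of the empty set is $+\infty$ (which is exactly $\beta^*(\cC)$ when $\cC$ has a loop). Indeed, from a fractional cover $f$ of $\cC$ of size $t=\sum_{A\in\cC}f(A)$ the point $\frac1t\sum_{A}f(A)\mathbf{1}_A$ lies in $K(\cC)$ and dominates $\frac1t\mathbf{1}_V$ coordinatewise; since $\cC$ is closed down, $K(\cC)$ is a down-closed polytope, so $\frac1t\mathbf{1}_V\in K(\cC)$ as well. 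Conversely, a representation $\frac1t\mathbf{1}_V=\sum_{A}\lambda_A\mathbf{1}_A$ as a convex combination yields the fractional cover $f(A)=t\lambda_A$ of size $t$.

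Next I would invoke Edmonds' matroid intersection polytope theorem: for matroids $\cp,\cq$ on $V$ with rank functions $r_\cp,r_\cq$,
\[
K(\cp\cap\cq)=\{\,x\in\re^V_{\ge0} : x(S)\le r_\cp(S)\ \text{and}\ x(S)\le r_\cq(S)\ \text{for every }S\subseteq V\,\},
\]
where $r_\cp(S)=rank(\cp[S])$ is the quantity appearing in the definition of $\zeta$, and likewise for $\cq$. Substituting $x=\frac1t\mathbf{1}_V$ into this description, $\frac1t\mathbf{1}_V\in K(\cp\cap\cq)$ holds precisely when $\frac{|S|}{t}\le r_\cp(S)$ and $\frac{|S|}{t}\le r_\cq(S)$ for all $S$, i.e.\ precisely when $t\ge\frac{|S|}{r_\cp(S)}$ and $t\ge\frac{|S|}{r_\cq(S)}$ for all $S$ (on a set $S$ with $r_\cp(S)=0<|S|$ or $r_\cq(S)=0<|S|$ there is a loop and both sides of the claimed identity are $+\infty$, so there is nothing to prove). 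Combining this with the previous paragraph, $\beta^*(\cp\cap\cq)$ is the least such $t$, namely $\max\bigl(\max_S\frac{|S|}{r_\cp(S)},\ \max_S\frac{|S|}{r_\cq(S)}\bigr)=\max(\zeta(\cp),\zeta(\cq))$.

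The one genuinely nontrivial ingredient is Edmonds' polytope theorem, and it is used only for the inequality $\beta^*(\cp\cap\cq)\le\max(\zeta(\cp),\zeta(\cq))$: the reverse inequality is elementary, since any edge of $\cp\cap\cq$ is in particular an edge of $\cp$, hence meets a set $S$ in at most $r_\cp(S)$ vertices, so fractionally covering $S$ costs at least $|S|/r_\cp(S)$, and symmetrically for $\cq$. So I expect the main point to be simply the (well-known) fact that the common independent set polytope of two matroids is cut out by the rank inequalities of the two matroids separately; everything else is a short LP computation. An alternative, slightly more hands-on route for the upper bound would be to clear denominators in $\frac1k\mathbf{1}_V$ with $k=\max(\zeta(\cp),\zeta(\cq))$ and appeal to the matroid intersection theorem directly to decompose the resulting integral vector into common independent sets, but this amounts to the same thing.
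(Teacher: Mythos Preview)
Your argument is correct. The paper itself does not prove this theorem; it only states it and refers to \cite{matcomp} for a proof, so there is no in-paper argument to compare against. Your route---identifying $\beta^*(\cC)$ with the least $t$ for which $\tfrac1t\mathbf{1}_V$ lies in the independence polytope $K(\cC)$, and then reading off the answer from Edmonds' description of $K(\cp\cap\cq)$---is the standard polyhedral derivation of this identity and is certainly in the spirit of what the citation points to. The one step worth a sentence of justification (which you gloss as ``since $\cC$ is closed down, $K(\cC)$ is a down-closed polytope'') is indeed routine: given $x=\sum_A\lambda_A\mathbf{1}_A\in K(\cC)$ and a coordinate $v$ with $x_v>0$, shifting mass from some $\lambda_A$ with $v\in A$ to $\lambda_{A-v}$ (legal because $A-v\in\cC$) lowers $x_v$ while remaining a convex combination.
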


A proof can be found, e.g., in  \cite{matcomp}.

\section{Fair and almost fair representation}

%Given a complex $\cC$ and a set $S$, an edge $A \in \cC$ is said to represent $S$ {\em fairly} (resp. {\em almost fairly}) if $|A %\cap S|\ge \frac{|S|}{\beta)\cC}$ (resp. $|A \cap S|\ge \frac{|S|}{\beta)\cC}-1$.

We say that a set $S$ represents a set $A$ $\alpha$-{\em fairly} (where $\alpha$ is a positive real number) if $|S \cap A| \ge \lfloor \alpha |A| \rfloor$, and that $S$ represents $A$ {\em almost $\alpha$-fairly} if $|S \cap A| \ge \lfloor \alpha |A|\rfloor -1$.
 A set is said to represent a partition $\ca=(A_1, \ldots ,A_m)$ of $V(\cC)$ (almost) $\alpha$-fairly if it represents all $A_i$'s (almost) $\alpha$-fairly.

 The following can be proved, e.g., by Edmonds' two matroids intersection theoerm \cite{edmondsdec} or using polyhedral methods:

\begin{theorem}\label{fairmatroid}
If $\cp$ is a matroid, then every partition of $V(\cp)$ has a fair $\frac{1}{\zeta(\cp)}$- representation by a set belonging to $\cp$.
\end{theorem}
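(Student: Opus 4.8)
The plan is to deduce this from Edmonds' matroid intersection theorem (the polyhedral route works equally well and I sketch it at the end). Write $\zeta = \zeta(\cp)$ and, for each class of the partition $\ca = (A_1,\dots,A_m)$, set $d_i = \lfloor |A_i|/\zeta \rfloor$. Since any single non-loop of $\cp$ spans rank $1$ we have $\zeta \ge 1$, hence $0 \le d_i \le |A_i|$. It is enough to produce an independent set $S \in \cp$ with $|S \cap A_i| = d_i$ for every $i$: such an $S$ represents $\ca$ fairly with respect to $\tfrac{1}{\zeta}$. To find it, introduce the partition matroid $M_2$ on $V(\cp)$ whose classes are $A_1,\dots,A_m$ with capacity $d_i$ on class $A_i$, so that a set is independent in $M_2$ exactly when it meets each $A_i$ in at most $d_i$ elements. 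A common independent set of $\cp$ and $M_2$ of size $\sum_i d_i$ is precisely an $S$ of the form we want.

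By Edmonds' matroid intersection theorem the largest common independent set of $\cp$ and $M_2$ has size $\min_{X \subseteq V}\bigl(r_\cp(X) + r_{M_2}(V\setminus X)\bigr)$, where $r_{M_2}(V\setminus X) = \sum_i \min\bigl(d_i, |A_i\setminus X|\bigr)$. So the whole proof reduces to checking that for every $X \subseteq V$,
\[
r_\cp(X) \ \ge\ \sum_i \max\bigl(0,\ d_i - |A_i\setminus X|\bigr) \ =\ \sum_i \max\bigl(0,\ |A_i\cap X| - (|A_i| - d_i)\bigr).
\]
The key estimate is that each summand on the right is at most $|A_i\cap X|/\zeta$: from $d_i \le |A_i|/\zeta$ we get $|A_i| - d_i \ge |A_i|(1 - 1/\zeta)$, hence $|A_i\cap X| - (|A_i|-d_i) \le |A_i\cap X| - |A_i| + |A_i|/\zeta \le |A_i\cap X|/\zeta$, the last inequality because $|A_i\cap X| \le |A_i|$ and $\zeta \ge 1$. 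Summing over $i$ and using that the $A_i$ partition $V$ yields $\sum_i \max(0,\dots) \le |X|/\zeta$. Finally, by the very definition $\zeta = \zeta(\cp) \ge |X|/\mathrm{rank}(\cp[X])$, i.e. $r_\cp(X) = \mathrm{rank}(\cp[X]) \ge |X|/\zeta$, which closes the inequality. Thus the intersection bound is at least $\sum_i d_i$, the desired common independent set exists, and the theorem follows.

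Alternatively, one can argue polyhedrally: the vector assigning weight $d_i/|A_i| \le 1/\zeta$ to every vertex of $A_i$ lies in the independence polytope of $\cp$ (its value on any $S$ is $\le |S|/\zeta \le r_\cp(S)$) and has total mass exactly $d_i$ on $A_i$; since the intersection of a matroid independence polytope with the box and equality constraints coming from a partition is an integral polytope, it contains the indicator vector of an independent $S$ with $|S\cap A_i| = d_i$ for all $i$. I do not anticipate a real obstacle here — the argument is essentially a one-line verification of the matroid intersection (or matroid polytope) inequality. The only points demanding a little care are the rounding hidden in $d_i = \lfloor |A_i|/\zeta\rfloor$, the identification of $\mathrm{rank}(\cp[X])$ with the matroid rank function $r_\cp(X)$ used to invoke the definition of $\zeta$, and the trivial degenerate case in which $\cp$ consists only of loops (where $\zeta$ is not finite and the statement is vacuous).
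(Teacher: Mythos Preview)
Your proof is correct and follows precisely the route the paper indicates: the paper does not actually supply a proof of this theorem but merely remarks that it ``can be proved, e.g., by Edmonds' two matroids intersection theorem \ldots\ or using polyhedral methods,'' and you have carried out both of these in detail. The matroid-intersection computation with the auxiliary partition matroid $M_2$ is the natural way to make the first suggestion explicit, and your verification of the min--max inequality via $\sum_i \max(0,d_i-|A_i\setminus X|)\le |X|/\zeta \le r_\cp(X)$ is clean and accurate.
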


It is possible that dimatroids behave almost as nicely with respect to fair representaion - with the fairness parameter inherited from their matroid constituents.

\begin{conjecture}\label{fairrepintmat}
Let $\cp,\cq$ be two matroids on the same ground set $V$, let $\cd=\cp \cap \cq$, and let $\zeta=\max(\zeta(\cp), \zeta(\cq))$.
Then any  partition  of  $V$  has an almost  $\frac{1}{\zeta}$-fair representation by a set from $\cd$.
\end{conjecture}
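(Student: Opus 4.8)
The plan is to reduce the integral statement to a fractional one via Theorem \ref{fracbeta} and then confront the rounding. By Theorem \ref{fracbeta}, $\beta^*(\cd)=\zeta$, so there is a fractional cover $f$ of $V$ by sets of $\cd$ with $\sum_{A\in\cd}f(A)=\zeta$. Normalizing, $g:=f/\zeta$ is a distribution on common independent sets for which every $v\in V$ is covered with total weight at least $1/\zeta$. Consequently the averaged vector $\bar x:=\sum_A g(A)\,\mathbf 1_A$ lies in the matroid-intersection polytope $P(\cp)\cap P(\cq)$ (it is a convex combination of incidence vectors of sets in $\cd$) and satisfies $\bar x(A_i)=\sum_{v\in A_i}\bar x_v\ge |A_i|/\zeta$ for every part $A_i$. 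Thus a fractional almost-fair (indeed fair) representation always exists, and the whole content of Conjecture \ref{fairrepintmat} is to round $\bar x$ to a single set $S\in\cd$ losing at most the additive $1$ allowed in each part. As a sanity check, the case $m=1$ is already settled by this averaging: since every set in the support of $g$ has size at most $\nu(\cd)$, the maximum common independent set, we get $|V|\le\sum_A f(A)|A|\le \zeta\,\nu(\cd)$, so $\nu(\cd)\ge|V|/\zeta$ and a maximum common independent set represents the trivial partition even fairly.

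For the general rounding I see two routes. The polyhedral route is to apply iterated rounding to $P(\cp)\cap P(\cq)$ augmented with the lower-bound constraints $x(A_i)\ge d_i$, where $d_i:=\lfloor |A_i|/\zeta\rfloor$; feasibility of the relaxation is witnessed by $\bar x$ above. One would try to mimic the Lau--Ravi--Singh analysis: at each step either a variable is fixed to $0$ or $1$, or some part constraint has small support and can be discarded at an additive cost of $1$. The topological route is to bound the connectivity of $\cd$ from below: for every $S\subseteq V$ the induced complex $\cd[S]=\cp[S]\cap\cq[S]$ satisfies $\beta^*(\cd[S])=\max(\zeta(\cp[S]),\zeta(\cq[S]))\le\zeta$ by Theorem \ref{fracbeta}, and one hopes for an estimate $\eta(\cd[S])\ge |S|/\zeta-c$ on the topological connectivity. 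Feeding such a bound into a topological Hall / fair-representation theorem of the kind behind Theorem \ref{fairmatroid} in the one-matroid case (where the hypothesis is precisely on each induced $\cp[S]$) would produce a common independent set meeting each $A_i$ in at least $\lfloor |A_i|/\zeta\rfloor-1$ elements.

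The main obstacle, in either route, is that the fractional-to-integral gap does not obviously localize to a loss of $1$ per part. In the polyhedral route, $P(\cp)\cap P(\cq)$ intersected with partition lower bounds is not an integral polytope — two matroids already lose integrality once side constraints are imposed — so a vertex of the relaxation need not expose a constraint of bounded support to drop, and controlling the accumulated discrepancy across all parts simultaneously, rather than a bound growing with $m$, is exactly the difficulty. In the topological route, the sticking point is establishing the uniform bound $\eta(\cd[S])\ge |S|/\zeta-c$ with $c$ small enough to yield a single unit of slack; the intersection of two matroids does not behave like a matroid under deletion and contraction, and $\zeta$ is not preserved under the contractions that an inductive connectivity computation would require. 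This is the same wall that forces the weakened multiplicative bound $(\tfrac1k-\tfrac1{|V|})|A_i|-1$ even in the special case $m=2$, which strongly suggests that ideas beyond straightforward rounding or the existing connectivity toolbox are needed to settle the full conjecture.
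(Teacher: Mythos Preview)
The statement you are attempting is Conjecture~\ref{fairrepintmat}, which the paper leaves open; there is no proof in the paper to compare against. Your proposal is not a proof either, and you say so explicitly: you set up the fractional relaxation correctly (via Theorem~\ref{fracbeta} the averaged vector $\bar x$ lies in the common-independent-set polytope and satisfies $\bar x(A_i)\ge |A_i|/\zeta$, and your $m=1$ sanity check is valid), and then you honestly identify that neither the iterated-rounding route nor the topological-connectivity route is known to close the gap to an additive loss of $1$ per part. That diagnosis is accurate; this is genuinely an open problem.

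It is worth noting, however, that the paper's partial result for $m=2$ (Theorem~\ref{main}) uses a third idea you do not mention. After truncating both matroids at level $g=\lfloor n/\zeta\rfloor$ so that every set in the support of an optimal fractional cover of $\cd^g$ has size exactly $g$ (Lemma~\ref{main:lemma}), one finds support sets $S$ and $T$ that individually represent $A$ and $B$ fairly, and then invokes an \emph{exchange sequence} in the dimatroid (Theorem~\ref{moving}) to walk from $S$ to $T$ through sets in $\cd$ of size $g$ or $g-1$, consecutive sets differing by a single element. An intermediate-value argument along this walk yields a set representing both parts almost fairly. This combinatorial swap technique is what secures the $m=2$ case, at the price of the weakened constant $\frac{1}{\zeta}-\frac{1}{n}$; it has no obvious extension to $m\ge 3$, which is precisely why the full conjecture remains unresolved. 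If you are looking for a line of attack, this exchange approach is a concrete alternative to the polyhedral and topological routes you outlined.
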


The matching complex of $C_4$, partitioned into two matchings, shows that one cannot hope for $\frac{1}{\zeta}$-fair representation, but only for almost fair representation.

 In \cite{fairrep} Conjecture \ref{fairrepintmat} was proved when  the dimatroid is the intersection of two partition matroids, each with parts of size $2$. This can be reduced to the case in which the complex is the complex of independent sets of vertices on a path.
\begin{theorem}\label{treesconj0}
If $P$ is a path and $(A_1, \ldots ,A_m)$ is a
partition of its vertex set $V$, then
there exists a subset $S$ of $V$ that is independent in $P$ and satisfies:
 $|S\cap A_i|\ge \frac{|A_i|}{2}-1$ for all $i \le m$.
\end{theorem}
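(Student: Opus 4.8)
The plan is to argue by induction on $|V|$, working with the formulation obtained by passing to the complement. Writing $S=V\setminus D$, the conclusion $|S\cap A_i|\ge\frac{|A_i|}{2}-1$ is, since $|S\cap A_i|$ is an integer, the same as $|S\cap A_i|\ge g(|A_i|)$ with $g(k):=\lfloor\frac{k-1}{2}\rfloor$, and this amounts to finding a \emph{vertex cover} $D$ of $P$ with $|D\cap A_i|\le\lfloor\frac{|A_i|}{2}\rfloor+1$ for all $i$. The bookkeeping is governed entirely by the two elementary identities $g(k)-g(k-1)=[k\text{ odd}]$ and $g(k)-g(k-2)=1$: they say exactly how the target for a colour class changes when it loses one, respectively two, of its vertices, hence when a local modification of an independent set is harmless.

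The inductive step peels two consecutive vertices from an end. Delete $v_{n-1},v_n$ to obtain $P'=v_1\dots v_{n-2}$, and let $S_0$ be a set given by the induction hypothesis for $P'$ with the colour classes restricted to $V(P')$. At most one of $v_{n-1},v_n$ can be added back to $S_0$, and $v_n$ always can, since its only neighbour $v_{n-1}$ has been removed. Using the identities above one checks that $S_0\cup\{v_n\}$ already works whenever the two peeled vertices lie in the same colour class, or in distinct classes one of which has even size; one may also discard from the start every colour class of size at most $2$, whose constraint $|S\cap A_i|\ge g(|A_i|)=0$ is vacuous. A mirror–image argument peels from the left end. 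After these reductions the only configuration not handled is the one in which \emph{both} ends of $P$ consist of two vertices from distinct colour classes, all four of which have odd size $\ge 3$.

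This last case is where I expect the real difficulty. The local peeling moves all fail here by exactly one unit on one of the odd classes, and small examples already show that the one–parameter family of minimum vertex covers obtained by sliding a single ``phase change'' along $P$ need not contain a good cover, so no purely local or single–switch argument can suffice. My plan for this case is to use both ends at once: put $v_1$ and $v_n$ into $S$, recurse on the interior path $v_3\dots v_{n-2}$, and then repair the two ``inner'' deleted vertices $v_2$ and $v_{n-1}$ by a coordinated local exchange in $S_0$ near each end — which should be available precisely because, the four end classes having size $\ge 3$, each of them reaches into the interior and so has spare vertices to trade with. Turning this repair into a proof — keeping track of the colour of the vertex displaced by each exchange, and ensuring the two exchanges do not interfere — is the main technical obstacle and the heart of the argument.

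Finally I would check the base cases $|V|\le 2$ (take $S=\emptyset$), confirm that the induction as set up proves nothing stronger than claimed, and record that the ``$-1$'' slack — necessary in the ambient Conjecture~\ref{fairrepintmat}, as the $C_4$ example shows — is exactly what is consumed by the two deferred end–vertices in the hard case.
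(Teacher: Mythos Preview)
Your proposal is not a proof but a plan with an explicitly unresolved hard case. You yourself say that in the configuration where both ends of $P$ consist of two vertices from distinct odd colour classes of size $\ge 3$, ``the local peeling moves all fail,'' and that turning your two-ended repair idea into an argument ``is the main technical obstacle and the heart of the argument.'' That obstacle is not overcome: you neither specify the coordinated exchanges nor verify that they exist, that they do not collide, and that they do not damage a third colour class. As written, the induction does not close.

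For comparison, the paper does not prove this theorem; it quotes it from \cite{fairrep} and remarks that ``even the case of paths seems to be non-trivial: the proofs of the above theorems use Borsuk's theorem.'' So the known argument is topological, not an elementary end-peeling induction. That is consistent with your experience: the single-switch and local-exchange moves you try are exactly the kind of arguments that a Borsuk-type proof bypasses, and the paper's comment is a warning that a purely combinatorial induction of the sort you outline is not expected to go through easily. If you want to pursue your approach, the burden is to actually carry out the hard case rather than defer it; absent that, the proposal does not establish the theorem.
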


In fact, something somewhat stronger is true - the same holds also when $P$ is a cycle. Another result proved in \cite{fairrep} is that  there exists an independent set $S$, such that $\sum_{i \le m}(\frac{|A_i|}{2}- |S\cap A_i|)^+ \le \frac{m}{2}$. This result is almost sharp. A conjecture  stated in \cite{fairrep} is that there is a set $S$ satisfying both conditions, namely:

\begin{conjecture}\label{treesconj0conj}
If $P$ is a path and $(A_1, \ldots ,A_m)$ is a
partition of its vertex set $V$, then
there exists a subset $S$ of $V$ that is independent in $P$ and satisfies:
 $|S\cap A_i|\ge \frac{|A_i|}{2}-1$ for all $i \le m$, with strict inequality
holding for all but at most $\frac{m}{2}$ sets $A_i$.
\end{conjecture}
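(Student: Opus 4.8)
The plan is to reduce Conjecture \ref{treesconj0conj} to the single statement that there is \emph{one} independent set achieving both conditions quoted from \cite{fairrep}, and then to attack that statement by rounding the uniform half-integral solution on the path.

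First I would record the parity bridge that makes the reduction work. For an independent $S$ write $d_i(S)=\left(\frac{|A_i|}{2}-|S\cap A_i|\right)^+$, and suppose $S$ satisfies simultaneously (a) $|S\cap A_i|\ge \frac{|A_i|}{2}-1$ for all $i$ and (b) $\sum_{i\le m} d_i(S)\le \frac{m}{2}$. I claim this already yields the conjecture. Indeed, the inequality in (a) fails to be strict only when $|S\cap A_i|=\frac{|A_i|}{2}-1$; since the left side is an integer this forces $|A_i|$ even, whence $d_i(S)=1$. When $|A_i|$ is odd the number $\frac{|A_i|}{2}-1$ is not an integer, so (a) already gives $|S\cap A_i|\ge \frac{|A_i|-1}{2}>\frac{|A_i|}{2}-1$ and strictness is automatic. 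Hence the number of indices for which strictness fails equals the number of even classes with $d_i(S)=1$, and by (b) this number is at most $\sum_i d_i(S)\le \frac{m}{2}$. Thus the whole problem becomes producing a single $S$ obeying (a) and (b) at once, rather than the two separate sets furnished by \cite{fairrep}.

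For the existence of such an $S$ I would work with the uniform half-integral point $x\equiv\frac12$ on the vertices. Since a path is bipartite, the stable-set polytope $\{x\ge 0:\ x_j+x_{j+1}\le 1\}$ is integral, and $x\equiv\frac12$ lies in it with $\sum_{v\in A_i}x_v=\frac{|A_i|}{2}$ for every $i$; so every class is represented \emph{exactly} fairly fractionally, and the task is purely to round to an integral independent set keeping each per-class error at most $1$ and the total positive error at most $\frac{m}{2}$. The convenient encoding is to order the elements of each class along the path and split them into consecutive pairs (with a leftover element when $|A_i|$ is odd). If an independent $S$ meets every pair then $|S\cap A_i|\ge\lfloor |A_i|/2\rfloor$, so $d_i(S)=0$ for even classes and $d_i(S)=\frac12$ for odd classes — far stronger than needed. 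The requirement ``$S$ independent in $P$ and $S$ meets every pair'' is then a $2$-SAT system in the variables $x_v=[v\in S]$: each path edge $\{u,w\}$ is the clause $(\lnot x_u\vee\lnot x_w)$ and each within-class pair is the clause $(x_u\vee x_w)$. The $C_4$ obstruction shows this system need not be satisfiable, so the real content is to choose $S$ violating as few pair-clauses as possible and, crucially, to confine the violations so that they fall only on even classes, at most one per class.

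The main obstacle is precisely this simultaneous control. A single unmet pair in an \emph{even} class costs exactly $1$ in deficiency and is harmless for (a), but an unmet pair in an \emph{odd} class would push its deficiency to $\tfrac32$ and break (a), and two unmet pairs in one even class would likewise break (a). Thus I must show that the unavoidable $2$-SAT violations can always be routed onto distinct even classes. I would analyze the implication graph of the system, whose obstructions are odd alternating cycles mixing the ``at most one in'' and ``at least one in'' edges, and argue by induction on the number of such cycles — or by a local swap along a violated pair — that each obstruction is resolvable at the price of one deficiency unit charged to a fresh even class, never to an odd class and never twice to the same class. Feeding Theorem \ref{treesconj0} in as the guarantee that a solution with all $d_i\le 1$ exists, the remaining step is a potential argument driving $\sum_i d_i$ down to at most $\frac{m}{2}$ within this family. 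The difficulty, and the point where the cyclic case is genuinely easier, is that a swap repairing one class typically damages a neighbouring one, so the potential must be global and must exploit that the classes \emph{partition} the vertex set; I expect balancing these local repairs against the global count to be the crux of the proof.
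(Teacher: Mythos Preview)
The statement you are attacking is presented in the paper as an open \emph{conjecture}, attributed to \cite{fairrep}; the paper offers no proof of it, so there is nothing on the paper's side to compare your approach against.

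Your parity reduction is correct and worth recording: if a single independent $S$ satisfies both (a) $|S\cap A_i|\ge\frac{|A_i|}{2}-1$ for every $i$ and (b) $\sum_i d_i(S)\le\frac{m}{2}$, then the conjecture follows, because equality in (a) forces $|A_i|$ even and hence $d_i(S)=1$, so the number of non-strict indices is bounded by $\sum_i d_i(S)$. Note, however, that (a)$+$(b) is strictly stronger than the conjecture itself --- for instance with several odd classes at deficiency $\tfrac12$ together with one even class at deficiency $1$ the conjecture can hold while (b) fails --- so you have chosen to aim at more than is asked.

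The substantive gap is that the second half of your proposal is a programme, not an argument. You explicitly leave the ``crux'' --- routing the unavoidable $2$-SAT violations onto distinct even classes, and the accompanying potential/swap argument --- as something you \emph{expect} to work. No swap lemma is stated or proved, the implication-graph analysis is not carried out, and the interaction you yourself flag (repairing one class at the cost of a neighbouring one) is exactly the obstruction that keeps this a conjecture rather than a theorem. A further wrinkle: the $C_4$ obstruction you invoke lives on a cycle, not on a path; on a path it is not even clear that your $2$-SAT system (with the consecutive-within-class pairing) is ever infeasible, and if it never is you would be establishing $|S\cap A_i|\ge\lfloor|A_i|/2\rfloor$ for all $i$, which is considerably stronger than what is conjectured. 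Either way, the decisive step is missing.
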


Even the case of paths seems to be non-trivial: the proofs of the above theorems use Borsuk's theorem.
Another case solved in \cite{fairrep} is that of the matching complex  $\cm(K_{n,n})$, and the partition is into three parts.

In \cite{fekszabo} a particular case is studied, where the partition is into two sets,  one of the matroids, say $\cp$, is the acyclic (graphic) matroid of a graph, and the other ($\cq$) is its dual   - so their intersection is the set of all acyclic sets of edges whose complement contains a spanning tree. In \cite{fekszabo} the following was proved:

\begin{theorem}
If $G$ is a graph such that $E(G)$ can be partitioned into two spanning trees, then for every set $A \subseteq E(G)$ there exist complementary  spanning trees $S,T$ such that $$\mid |E(S) \cap A|-|E(T) \cap A| \mid \le 1.$$
\end{theorem}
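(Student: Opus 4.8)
The plan is to translate the statement into matroid language. Let $\cp$ be the graphic (cycle) matroid of $G$ and $\cp^*$ its dual, so that the bases of $\cp$ are the spanning trees of $G$ and the bases of $\cp^*$ are their complements; thus $S$ and $T:=E(G)\setminus S$ are complementary spanning trees exactly when $S$ is a common base of $\cp$ and $\cp^*$. Write $\cb$ for the set of these common bases; the hypothesis says $\cb\neq\emptyset$, and clearly $S\in\cb$ if and only if $E(G)\setminus S\in\cb$. Since $|E(S)\cap A|+|E(T)\cap A|=|A|$ whenever $S\in\cb$, the conclusion $\bigl||E(S)\cap A|-|E(T)\cap A|\bigr|\le 1$ says precisely that the value set $W=\{\,|S\cap A|:S\in\cb\,\}$ meets $\{\lfloor |A|/2\rfloor,\lceil |A|/2\rceil\}$; and since $W$ is symmetric about $|A|/2$, it is enough to show that $W$ is an interval of consecutive integers (a symmetric integer interval around $|A|/2$ automatically contains $\lfloor |A|/2\rfloor$, and its complementary base realizes $\lceil|A|/2\rceil$). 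So the theorem reduces to a gap-freeness statement about the common bases of a matroid and its dual.

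I would prove this by a rebalancing argument. Choose $S\in\cb$ for which $|S\cap A|$ is as close as possible to $|A|/2$, and suppose for contradiction that $|S\cap A|\notin\{\lfloor |A|/2\rfloor,\lceil|A|/2\rceil\}$; by the symmetry $S\leftrightarrow E(G)\setminus S$ we may assume $|S\cap A|>\lceil|A|/2\rceil$, so $|S\cap A|-|T\cap A|\ge 2$. It then suffices to produce a single imbalance-correcting symmetric exchange: edges $e\in S\cap A$ and $f\in T\setminus A$ with $S-e+f$ and $T-f+e$ both spanning trees, i.e. $S-e+f\in\cb$. For such a pair $|(S-e+f)\cap A|=|S\cap A|-1$ is still at least $\lceil|A|/2\rceil$, hence strictly closer to $|A|/2$, contradicting the choice of $S$. (Iterating these single exchanges is also exactly what shows $W$ has no gaps.)

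The crux is producing that exchange, and this is where the relation $\cq=\cp^*$ must really be used. The condition $S-e+f\in\cb$ unwinds, self-dually, to: $f$ lies in the fundamental cocircuit (fundamental cut) of $e$ with respect to $S$, and simultaneously $e$ lies in the fundamental cocircuit of $f$ with respect to $T$. Brualdi's symmetric exchange theorem, applied to the two disjoint bases $S$ and $T$ of the single matroid $\cp$, guarantees that for every $e\in S$ there is \emph{some} partner $f\in T$ with $S-e+f\in\cb$ (obtained from a common element of the fundamental circuit and the fundamental cocircuit of $e$ in $T+e$). What is needed is the extra control of taking $e\in S\cap A$ and forcing the partner $f$ outside $A$. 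This cannot come from a Hall/matching argument, since the symmetric exchange bipartite graph between $S$ and $T$ need not have a perfect matching — this already fails for $G=K_4$ with $S,T$ a complementary pair of Hamilton paths. So the main work is a genuinely global argument: either a direct analysis of how the fundamental circuits and cocircuits of $G$ interact with the partition $\{A,E(G)\setminus A\}$, or an augmenting-path argument in the matroid-intersection exchange digraph of $\cp$ and $\cp^*$ showing that, by self-duality, an imbalance between $|S\cap A|$ and $|T\cap A|$ forces an alternating structure that collapses to a single imbalance-correcting swap within $\cb$. I expect this step to be the main obstacle, and the one place where the graphic structure (or at least the relation $\cq=\cp^*$) cannot be dispensed with.
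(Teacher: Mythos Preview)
The paper does not prove this theorem; it is quoted as a result of Fekete and Szab\'o and used only as motivation for the fair-representation theme, so there is no in-paper argument to compare your outline against.

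Your proposal itself has a genuine gap, and you name it yourself. After the (correct) reduction to showing that $W=\{|S\cap A|:S\in\cb\}$ is an interval of integers, everything hinges on producing, whenever $|S\cap A|-|T\cap A|\ge 2$, a single symmetric exchange $e\in S\cap A$, $f\in T\setminus A$ with $S-e+f\in\cb$. Brualdi's theorem gives a symmetric-exchange partner $f$ for each $e\in S$, but, as your own $K_4$ observation shows, that partner can be forced and may well land in $A$; the colour control you need is not a formality. You then offer two possible routes --- a direct circuit/cocircuit analysis, or an augmenting-path argument in the exchange digraph exploiting self-duality --- but carry out neither, ending with ``I expect this step to be the main obstacle''. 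That step \emph{is} the content of the theorem: producing the imbalance-correcting swap is exactly what the Fekete--Szab\'o argument has to supply, and nothing in your write-up explains why such a swap must exist. Until you provide that mechanism, what you have is a correct reformulation of what must be shown, together with an honest identification of the hard part, but not a proof.
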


In our terminology, $E(S)$ is a set that represents the partition $(A,E(G)\setminus A)$ $\frac{1}{2}$-fairly. This type of pairs of matroids yields another example showing that the ``almost'' qualification is needed in Conjecture \ref{fairrepintmat}:

\begin{example}
Let  $\cp$ be the graphic matroid on $E(K_4)$,  and let $\cq$ be its dual. Let $(A_1,A_2,A_3)$ be the partition of $E(K_4)$  into three matchings. Then $\zeta(\cp)=\zeta(\cq)=2$, and there is no set in $\cp \cap \cq$ meeting all $A_i$'s.
\end{example}

In this paper we prove a slightly weaker version of Conjecture \ref{fairrepintmat}, when  the partition is into two sets.

\begin{theorem}\label{main}
Let $\cp,\cq$ be two matroids on the same ground set $V$, and let $\cd=\cp\cap \cq$. Let $n=|S|$, $\zeta=\max(\zeta(\cp), \zeta(\cq))$ and $\delta_{\zeta,n}=\frac{1}{\zeta}-\frac{1}{n}$.
Then any  partition  of  $V$  into two sets has an almost  $\delta_{\zeta,n}$-fair representation by a set from $\cd$.
\end{theorem}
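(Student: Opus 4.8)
\emph{Proof plan.} Write $\delta=\delta_{\zeta,n}=\tfrac1\zeta-\tfrac1n$ and, for $i=1,2$, set $t_i=\lfloor\delta|A_i|\rfloor-1$; we must produce $S\in\cd$ with $|S\cap A_i|\ge t_i$ for both $i$. The numerical fact driving the proof is
\[
t_i+1=\lfloor\delta|A_i|\rfloor\le\delta|A_i|=\tfrac{|A_i|}{\zeta}-\tfrac{|A_i|}{n},\qquad\text{so}\qquad\tfrac{|A_i|}{\zeta}\ \ge\ (t_i+1)+\tfrac{|A_i|}{n}.
\]
By Edmonds' matroid intersection theorem the polytope $\Pi:=\operatorname{conv}\{\chi_S:S\in\cd\}\subseteq\mR^V$ equals $\{x\ge0:\ x(X)\le r_{\cp}(X)\text{ and }x(X)\le r_{\cq}(X)\ \text{for all }X\subseteq V\}$, and it is integral with vertex set $\{\chi_S:S\in\cd\}$. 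Theorem~\ref{fracbeta} gives $\beta^*(\cd)=\zeta$; rescaling an optimal fractional cover by $\tfrac1\zeta$ and using this description of $\Pi$ yields a point $y\in\Pi$ with $y_v\ge\tfrac1\zeta$ for all $v$. In particular $r_{\cp\cap\cq}(A_i)\ge y(A_i)\ge\tfrac{|A_i|}\zeta$ and $r_{\cp\cap\cq}(V)\ge y(V)\ge\tfrac n\zeta$.

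First I would prove a \emph{fractional} version of the theorem, with room to spare. Let $g(a)=\max\{x(A_2):\ x\in\Pi,\ x(A_1)\ge a\}$; being the optimal value of a linear program as a function of one right-hand side, $g$ is concave on its domain. Since $y$ is feasible for the programs defining $g(0)$ and $g\big(\tfrac{|A_1|}\zeta\big)$, both of those values are $\ge\tfrac{|A_2|}\zeta$, whence by concavity $g(a)\ge\tfrac{|A_2|}\zeta$ for every $a\in\big[0,\tfrac{|A_1|}\zeta\big]$. As $t_1<\tfrac{|A_1|}\zeta$ this gives $g(t_1)\ge\tfrac{|A_2|}\zeta\ge(t_2+1)+\tfrac{|A_2|}n$. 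Hence there is $x^*\in\Pi$ with $x^*(A_1)\ge t_1$ and $x^*(A_2)\ge(t_2+1)+\tfrac{|A_2|}n$, and I would take $x^*$ to be a vertex of $\Pi\cap\{x(A_1)\ge t_1\}$ that attains $g(t_1)$.

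The remaining task is to round $x^*$ to a genuine $S\in\cd$ with $|S\cap A_1|\ge t_1$ and $|S\cap A_2|\ge t_2$; we have a surplus of more than $1$ — in fact $1+\tfrac{|A_2|}n$ — in the $A_2$-coordinate to spend on this rounding. \textbf{This is the crux of the argument and the step I expect to be the main obstacle.} If $x^*$ is integral we are done. Otherwise $x^*$ lies in the relative interior of an edge $[u,v]$ of $\Pi$ and satisfies $x^*(A_1)=t_1$; its endpoints $u,v$ are common independent sets. If $u(A_1)=v(A_1)$, then both equal $t_1$ and the endpoint with the larger $A_2$-count satisfies both requirements (that count is $\ge x^*(A_2)>t_2$), so we may assume $v(A_1)\le t_1-1<t_1+1\le u(A_1)$. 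If $u(A_2)\ge t_2$ then $S=u$ works. The hard case is $u(A_2)\le t_2-1$, which forces $v(A_2)>x^*(A_2)\ge\tfrac{|A_2|}\zeta>t_2+1$: now $v$ is comfortably fair on $A_2$ but short on $A_1$, while $u$ is fair on $A_1$ but short on $A_2$, and neither can be used as is. Such edges genuinely occur, because an edge of the intersection polytope of two matroids corresponds to an alternating path or cycle in the exchange graph and can be long.

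To resolve this one must leave the endpoints and work with the exchange structure of $(\cp,\cq)$ directly: the goal is to pass from the $A_2$-heavy set $v$ to a common independent set with at least $t_1$ elements of $A_1$ while surrendering at most $1+\tfrac{|A_2|}n$ elements of $A_2$. Two ingredients should make this feasible. First, the barycentric coordinate $\theta=\dfrac{t_1-v(A_1)}{u(A_1)-v(A_1)}$ of $x^*$ on the edge has denominator at most $|A_1|\le n$, so the ``fractional slack'' $\tfrac1\zeta-\tfrac1n$ is quantized in steps of $\tfrac1n$, which is precisely the granularity appearing in $\delta_{\zeta,n}$. Second, the alternating path or cycle underlying the edge $[u,v]$ organizes the common-independent-set exchanges that raise the $A_1$-count, and one should be able to bound the $A_2$-elements lost along the way by that granularity together with $r_{\cp\cap\cq}(A_1)-v(A_1)\ge\tfrac{|A_1|}n$. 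Carrying out this bookkeeping rigorously — keeping the $A_2$-loss inside the budget $1+\tfrac{|A_2|}n$ while the two matroids interact along a possibly long exchange — is the real difficulty, and it is exactly this unavoidable rounding loss that forces the weakened parameter $\delta_{\zeta,n}=\tfrac1\zeta-\tfrac1n$ and the additive $-1$ in the statement, in place of the parameter $\tfrac1\zeta$ conjectured in Conjecture~\ref{fairrepintmat}.
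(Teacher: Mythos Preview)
Your proposal is incomplete: you correctly isolate the ``hard case'' where the two endpoints $u,v$ of the polytope edge are each fair on one side and deficient on the other, but you do not resolve it --- you only gesture at using the exchange structure along the underlying alternating path and explicitly flag the bookkeeping as the main obstacle. That obstacle is real: an edge of the common-independent-set polytope can have its endpoints differ by an arbitrarily long augmenting walk, so $u(A_1)-v(A_1)$ can be large, the fractional points on the segment $[u,v]$ are of no help since they are not in $\cd$, and nothing in your setup bounds the $A_2$-loss incurred while raising the $A_1$-count one exchange at a time.

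The paper sidesteps exactly this via two ingredients you do not use. First, instead of working with $\cd$ it truncates both matroids at level $g=\lfloor n/\zeta\rfloor$ (Lemma~\ref{main:lemma}); this forces every set in the support of an optimal fractional cover of $\cd^g$ to have size exactly $g$, and averaging then yields two \emph{equal-size} sets $S,T\in\cd$ with $S$ $\delta_{\zeta,n}$-fair on $A$ and $T$ $\delta_{\zeta,n}$-fair on $B$. Second --- and this is the key combinatorial lemma you are missing --- Theorem~\ref{moving} shows that any two sets in $\cd$ of the same size are connected by a sequence $S=S_0,S_1,\ldots,S_g=T$ in $\cd$ in which consecutive sets differ by a single element (the proof is a short greedy argument using $\cq$-circuits). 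Along this sequence $|S_i\cap A|$ changes by at most $1$ per step, and a direct computation shows each $S_i$ is almost $\delta_{\zeta,n}$-fair on at least one of $A,B$; an intermediate-value argument then produces a set almost $\delta_{\zeta,n}$-fair on both. Your polytope edge $[u,v]$ is in effect a single long jump; Theorem~\ref{moving} replaces it by a walk of unit steps \emph{inside} $\cd$, and the equal-size hypothesis of that theorem is precisely what the truncation trick supplies. Without these two ideas your rounding step does not go through.
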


\section{Exchanges in dimatroids}

In matroids it is possible to switch between  independent sets of the same size  by sequences of exchanges. The same is true also for dimatroids. The next theorem enables the use of ``mean value'' arguments in dimatroids.

\begin{theorem}\label{moving}
Let $\cp,\cq$ be two matroids, and let $\cd=\cp \cap \cq$. Let $S,T\in\cd$ be two sets of the same size $g$. Then there exist a sequencing $s_1,\ldots,s_g$ of the elements of $S$ and a sequencing $t_1,\ldots,t_g$ of the elements of $T$ such that $S-s_1+t_1-s_2+\ldots +t_{i-1}-s_i \in \cd$ for all $i=1,\ldots,g$.

\begin{proof}
Below we use the common notation $A+x$ for $A \cup\{x\}$ and $A-a$ for $A \setminus \{a\}$.

Let $S_1=S-s_1$ for some $s_1\in S$. Since $|T|>|S_1|$, there exists $t_1\in T$ such that $S_1+t_1\in \cp$. If also $S_1+t_1\in \cq$ we continue by choosing $s_2\in S_1$ arbitrarily. Otherwise, we choose any  $s_2$  in $C_\cq(S_1,t_1)$, the circuit in $\cq$ contained in $S_1+t_1$.   and then  $S_1+t_1-s_2\in \cd$. Now, suppose we have obtained a set $S_i=S-s_1+t_1-s_2+\cdots+t_{i-1}-s_i\in\cd$. Since $|S_i|<|T|$ there is an element $t_i\in T$ such that $S_i+t_i\in\cp$. If also $S_i+t_i\in\cq$ we can pick $s_{i+1}$ arbitrarily from $S_i$ and we are done. Otherwise we pick $s_{i+1}\in C_\cq(S_i, t_i)\cap S$ (such an element must exist since $T\in\cq$) and we have $S_i+t_i-s_{i+1}\in\cd$ as required.
\end{proof}
\end{theorem}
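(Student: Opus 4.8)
The plan is to build the two sequences greedily, one exchange at a time, maintaining throughout the invariant that the current set lies in $\cd$ and has size $g-1$. Concretely, having already committed to $s_1,\dots,s_i$ and $t_1,\dots,t_{i-1}$, I would track the set
\[
S_i=(S\setminus\{s_1,\dots,s_i\})\cup\{t_1,\dots,t_{i-1}\},
\]
which has size $g-1$ and consists of the still-unused part of $S$ together with the already-inserted part of $T$; note that this is exactly the set $S-s_1+t_1-\dots+t_{i-1}-s_i$ appearing in the statement. The base case is immediate: since $\cd$ is closed down, deleting any single $s_1\in S$ from $S\in\cd$ leaves $S_1=S-s_1\in\cd$.

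For the inductive step I would first produce the incoming element $t_i$ and then the outgoing element $s_{i+1}$. Because $S_i\in\cp$ and $T\in\cp$ with $|S_i|=g-1<g=|T|$, the augmentation property of $\cp$ furnishes some $t_i\in T\setminus S_i$ with $S_i+t_i\in\cp$; as $S_i\supseteq\{t_1,\dots,t_{i-1}\}$, this $t_i$ is a genuinely new element of $T$. If it happens that $S_i+t_i\in\cq$ as well, then $S_i+t_i\in\cd$, and since $\cd$ is closed down I may delete any element $s_{i+1}$ of the surviving part $S\setminus\{s_1,\dots,s_i\}$ and still have $S_{i+1}\in\cd$. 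Otherwise $S_i+t_i\notin\cq$, and since $S_i\in\cq$ there is a unique $\cq$-circuit $C_\cq(S_i,t_i)$ inside $S_i+t_i$; deleting any one of its elements restores $\cq$-independence, while $\cp$-independence is automatically preserved under deletion. Choosing $s_{i+1}\in C_\cq(S_i,t_i)$ then yields $S_{i+1}=S_i+t_i-s_{i+1}\in\cd$, as required.

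The one point that needs care, and which I expect to be the crux of the argument, is that $s_{i+1}$ can always be chosen to be an element of $S$ not yet used, so that the $s_j$'s really do enumerate $S$ and the $t_j$'s really do enumerate (part of) $T$ without collisions. This is precisely where the second matroid enters essentially. The circuit $C_\cq(S_i,t_i)$ is a subset of $S_i+t_i$, whose $T$-elements are exactly $\{t_1,\dots,t_i\}\subseteq T$; since $T\in\cq$, this portion is $\cq$-independent and hence cannot contain the $\cq$-circuit. Therefore $C_\cq(S_i,t_i)$ must meet the complementary part $S\setminus\{s_1,\dots,s_i\}$, and any element of that intersection is a legitimate choice for $s_{i+1}$. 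Establishing this claim cleanly is the heart of the proof; everything else is bookkeeping.

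Running this for $i=1,\dots,g-1$ consumes all of $S$ as $s_1,\dots,s_g$ and inserts $g-1$ elements $t_1,\dots,t_{g-1}$ of $T$ (take $t_g$ to be the last remaining element of $T$ to complete its sequencing), and it exhibits exactly the sets $S_1,\dots,S_g\in\cd$ demanded by the theorem. Beyond the circuit-meets-$S$ claim I anticipate no serious difficulty: the facts that $t_i$ is fresh, that the unused part of $S$ is nonempty at each step (it has size $g-i\ge 1$ for $i<g$), and that deletion preserves membership in each matroid are all routine consequences of the matroid axioms together with $\cd$ being closed down.
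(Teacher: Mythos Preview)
Your proposal is correct and follows essentially the same approach as the paper: greedily augment via $\cp$ to obtain $t_i$, then use the $\cq$-circuit through $t_i$ (and the fact that $T\in\cq$) to locate an unused element of $S$ to remove as $s_{i+1}$. If anything, you are slightly more careful than the paper in insisting that $s_{i+1}$ be chosen from $S\setminus\{s_1,\dots,s_i\}$ (rather than from $S_i$) in the case $S_i+t_i\in\cq$, and in verifying that $t_i$ is genuinely new.
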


\section{Proof of Theorem \ref{main}. }

For a complex $\cC$ and a positive integer $g$, let  $\cC^g=\{A \in \cC \mid |A| \le g\}$. Clearly, if $\cm$ is a matroid then so is $\cm^g$.

A main tool we shall use is:

\begin{lemma}\label{main:lemma}
Let $\cm$ be a matroid with $|V(\cm)|=n,\zeta(\cm)=\zeta$, and let $g$ be an integer not larger than $\frac{n}{\zeta}$.
Then $\zeta(\cm^g)=\frac{n}{g}$.
\end{lemma}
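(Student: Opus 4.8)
The plan is to read off $\zeta(\cm^g)$ from the definition $\zeta(\cm^g)=\max_{S\subseteq V}\frac{|S|}{rank(\cm^g[S])}$, and to split the analysis according to the size of $S$. First I would observe that for any $S\subseteq V$, the matroid $\cm^g[S]$ coincides with $\cm[S]$ truncated at level $g$, so $rank(\cm^g[S])=\min(rank(\cm[S]),g)$. Hence the candidate ratio $\frac{|S|}{rank(\cm^g[S])}$ equals $\frac{|S|}{rank(\cm[S])}$ when $rank(\cm[S])\le g$, and equals $\frac{|S|}{g}$ when $rank(\cm[S])>g$. To bound $\zeta(\cm^g)$ from above, in the first case I use $\frac{|S|}{rank(\cm[S])}\le\zeta(\cm)$; since $g\le\frac n\zeta\le\frac{|V|}{\zeta(\cm)}$... hmm, I need the cleaner bound. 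Actually I would just note that in the first case $\frac{|S|}{rank(\cm[S])}\le\zeta(\cm)=\zeta\le\frac n g$ by the hypothesis $g\le\frac n\zeta$, and in the second case $\frac{|S|}{g}\le\frac n g$ trivially since $|S|\le n$. Therefore $\zeta(\cm^g)\le\frac n g$.

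For the reverse inequality I would exhibit a witnessing set. The natural choice is $S=V$ itself: then $rank(\cm^g[V])=\min(rank(\cm),g)$. Here I need $rank(\cm)\ge g$, which follows because $rank(\cm)\ge\frac{n}{\zeta(\cm)}=\frac n\zeta\ge g$ (the first inequality being the trivial bound $\zeta(\cm)\ge\frac{n}{rank(\cm)}$ obtained by taking $S=V$ in the definition of $\zeta$). Consequently $rank(\cm^g[V])=g$, so $\frac{|V|}{rank(\cm^g[V])}=\frac n g$, giving $\zeta(\cm^g)\ge\frac n g$. Combining the two inequalities yields $\zeta(\cm^g)=\frac n g$.

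The only genuine point requiring care — and the step I expect to be the main obstacle — is the claim that $rank(\cm^g[S])=\min(rank(\cm[S]),g)$ for every $S$, i.e. that truncating a matroid and then restricting to $S$ gives the same rank as restricting and then truncating. This needs the fact that $\cm^g$ is indeed a matroid (the truncation of a matroid at level $g$), which the paper has already recorded, together with the standard formula for the rank function of a truncation: the rank of $T$ in $\cm^g$ is $\min(rank_\cm(T),g)$. Both are classical, so once they are invoked the rest is the elementary two-case inequality above. I should also double check the boundary behaviour when $g$ equals $0$ or when $rank(\cm[S])=0$, but these are degenerate and harmless.
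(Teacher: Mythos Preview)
Your proof is correct. The argument is essentially the same as the paper's, just organised differently: where you invoke the standard truncation rank formula $rank_{\cm^g}(S)=\min(rank_{\cm}(S),g)$ and then split into the two cases $rank_{\cm}(S)\le g$ and $rank_{\cm}(S)>g$, the paper instead handles both cases uniformly by explicitly constructing a witness $R\subseteq S$ in $\cm^g$ of size $\lceil g|S|/n\rceil$ (take a maximal $\cm$-independent subset $T\subseteq S$, of size at least $|S|/\zeta\ge g|S|/n$, and shrink it). Your packaging via the rank formula is slightly cleaner and more conceptual; the paper's version is more self-contained in that it does not appeal to the truncation rank formula as a black box. The lower bound via $S=V$ is identical in both.
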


\begin{proof}
By the definition of $\zeta$,
$\zeta(\cm^g)\ge \frac{n}{rank_{\cm^g}(V)}\ge \frac{n}{g}$.

For the other direction, in order to show that
 $\zeta(\cm^g)\le \frac{n}{g}$ we have to show that for every $S\subseteq V$ it is true that  $\frac{|S|}{rank_{\cm^g}(S)}\le \frac{n}{g}$ .
That is, we want to show that  every subset $S$ of $V$ contains  a set
$R$   belonging to $\cm$ such that $r=|R|$ satisfies  $r\le g$ (so that $R \in \cm^g$) and $r \ge \frac{|S|g}{n}$.
By the definition of $\zeta$,
there exists a subset $T \in \cm$ of $S$ of   size $t\ge \frac{|S|}{\zeta}$, and since $g \le \frac{n}{\zeta}$ we have $t \ge g\frac{|S|}{n}$. Let $r=\lceil g\frac{|S|}{n} \rceil$. Since $t$ is an integer, by the above $t \ge r$, so there exists a subset $R$ of $T$ of size $r$. Since $|S| \le n$ we have $r \le g$, and thus $R$ is the desired set.
\end{proof}

We can now prove Theorem \ref{main}.

\begin{proof}
Let $n=|V|$, $g=\lfloor \frac{n}{\zeta} \rfloor$ and $h=\frac{n}{g}$.
Write $n=g\zeta+\theta$, where $\theta <\zeta$. We have
\begin{equation}\label{zeta-h}
\frac{1}{h}=\frac{g}{n}=\frac{n-\theta}{n\zeta}=\frac{1-\theta/n}{\zeta}>\frac{1}{\zeta}-\frac{1}{n}  =\delta_{\zeta,n}
\end{equation}

%hence we have $|S\cap A_i|\ge [\frac{1}{\zeta}-\frac{1}{n}]|A_i|-1$.

By Theorem  \ref{fracbeta} and Lemma~\ref{main:lemma} we have $\beta^*(\cd^g)=h$. Let $f: \cd^g \to  \re$ be a minimal fractional cover.  Then,

\begin{equation*}
n=\sum_{v \in V}1 \le \sum_{v \in V}\sum_{v\in e \in supp(f)}f(e)= \sum_{e \in supp(f)}f(e)\sum_{v \in e}1=\sum_{e \in supp(f) }f(e)|e|.
\end{equation*}

Since $\sum_{e\in supp(f)}f(e)=h$, the weighted average of the sizes of the edges in $supp(f)$ is at least $\frac{n}{h}=g$. Since $|e|\le g$ for all $e \in \cd^g$, we must have that $|e|=g$ for all $e \in \cd^g$.

Let $(A,B)$ be a partition of $V$. Clearly, some set in $supp(f)$ represents $A$ $\delta_{\zeta,n}$-fairly. Otherwise, \eqref{zeta-h} yields
\begin{equation*}
\begin{split}
|A|=&\sum_{v \in A}1 \le \sum_{v \in A}\sum_{v\in e \in supp(f)}f(e)\\
=& \sum_{e \in supp(f)}f(e)\sum_{v \in e\cap A}1=\sum_{e \in supp(f) }f(e)|e\cap A|\\
<&\sum_{e \in supp(f) }f(e)\delta_{\zeta,n}|A| < \sum_{e \in supp(f) }f(e)\frac{|A|}{h}\\=&|A|.
\end{split}
\end{equation*}

A contradiction. The same holds for $B$.

So, there exist  a set $S\in supp(f)$ that represents $A$ $\delta_{\zeta,n}$-fairly, and a set $T\in supp(f)$ that represents $B$ $\delta_{\zeta,n}$-fairly. If any of $S$ or $T$ represents both $A$ and $B$ almost $\delta_{\zeta,n}$-fairly, then we are done.
Otherwise, by Theorem \ref{moving}, there exists a sequence of sets $S=S_0, S_1, \ldots, S_g=T$ in $\cd$, such that $|S_1|=\cdots=|S_{g-1}|=g-1$ and every two adjacent sets in the sequence differ by one element.
Now, each set in this sequence must represent at least one of $|A|$ or $|B|$ almost $\delta_{\zeta,n}$-fairly. To see this, suppose $S_i$ does not represent $A$ $\delta_{\zeta,n}$-fairly. We have
\begin{equation*}
\begin{split}
|S_i\cap B|&\ge g-1-|S_i\cap A|
 > g-1-(\frac{1}{\zeta}-\frac{1}{n})|A|\\
&=g-1-(\frac{1}{\zeta}-\frac{1}{n})(n-|B|)
=g-\frac{n}{\zeta}+(\frac{1}{\zeta}-\frac{1}{n})|B|\\
&> (\frac{1}{\zeta}-\frac{1}{n})|B| - 1.
\end{split}
\end{equation*}

Thus, $S_i$ represents $B$ almost $\delta_{\zeta,n}$-fairly. Since adjacent sets in the sequence differ by one element and $S_0$ represent $A$ $\delta_{\zeta,n}$-fairly, it follows that one of the sets in this sequence must represent both $A$ and $B$ almost $\delta_{\zeta,n}$-fairly.

\end{proof}

{\bf Remark:}~~
Truncating the matroids at $g$ is intended to obtain a fractional cover whose support consists of sets of equal size. There is another natural approach to obtaining this aim, which is to take an arbitrary fractional cover, and balance its support sets. This will require proving the following conjecture, which is of  interest on its own:

\begin{conjecture}
If $\cd$ is a dimatroid and $C,D \in \cd$, then there exist $C', D' \in \cd$ of almost equal size whose union is $C \cup D$.
\end{conjecture}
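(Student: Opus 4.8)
We outline a possible route to this conjecture.

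The plan is to establish the (formally stronger) statement that $U:=C\cup D$ has a \emph{partition} into two members of $\cd$ of almost equal size, reverting to the overlapping ``cover'' formulation only if the partition version should turn out to fail. One first reduces to the disjoint case: replacing $D$ by $D\setminus C$ alters neither $U$ nor membership in $\cd$ (a dimatroid is closed downward), so we may assume $U=C\sqcup D$, and then $\{C,D\}$ is an edge-cover of $U$ by $\cd$, so $\beta(\cd[U])\le 2$. Among all partitions $U=X\sqcup Y$ with $X,Y\in\cd$ --- there is at least one, namely $(C,D)$ --- choose one, still written $(C,D)$, minimising $\bigl||X|-|Y|\bigr|$. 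If this minimum is at most $1$ we are done, so everything rests on the following claim: \emph{if $U=C\sqcup D$ with $C,D\in\cd$ and $|C|\ge|D|+2$, then there is a partition $U=C'\sqcup D'$ with $C',D'\in\cd$, $|C'|=|C|-1$ and $|D'|=|D|+1$} --- which contradicts minimality, since one such rebalancing step decreases $\bigl||X|-|Y|\bigr|$ by $2$.

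To prove the claim, observe first that if some $u\in C$ has $D+u\in\cd$ we are done immediately ($C'=C-u$, $D'=D+u$). So assume $D+u\notin\cd$ for every $u\in C$, and set $A=\{u\in C:D+u\notin\cp\}=C\cap span_\cp(D)$ and $B=\{u\in C:D+u\notin\cq\}$, so that $C=A\cup B$. Since $C$ is $\cp$-independent, $|A|\le rank_\cp(D)\le|D|$, whence $|C\setminus A|\ge 2$, and symmetrically $|C\setminus B|\ge 2$; moreover $C\setminus A$ and $C\setminus B$ are disjoint. Pick $u\in C\setminus A$: then $D+u\in\cp$ while (as $u\in B$) $D+u\notin\cq$, so the $\cq$-circuit $K\subseteq D+u$ contains $u$, and for every $v\in K\setminus\{u\}\subseteq D$ we have $D+u-v\in\cd$. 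One now wants to push a further element $w\in C$, $w\ne u$, across so that $D'=D+u-v+w$ and $C'=C-u-w+v$ both land in $\cd$; this is a length-three alternating sequence in the exchange structure of the pair $(\cp,\cq)$, of the same flavour as the sequences built in Theorem~\ref{moving}, and in general one expects to need an alternating path $u_1,v_1,u_2,v_2,\dots$ (with the $u_i$ drawn from $C$ and the $v_i$ from $D$) whose net effect transfers one element from $C$ to $D$ while keeping both sides inside $\cd$.

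The crux --- and the reason this is only a conjecture --- is to guarantee that such an alternating path exists whenever $|C|\ge|D|+2$. In a single matroid the surplus $|C|>|D|$ at once supplies an augmenting step, so $\cp$ and $\cq$ can each be rebalanced by one \emph{separately}; the obstacle is to rebalance in a way that is simultaneously legal in $\cp$ and in $\cq$, i.e.\ to maintain along the path all four conditions $C'\in\cp$, $C'\in\cq$, $D'\in\cp$, $D'\in\cq$. The natural attempt is to form an Edmonds-type exchange digraph on $U$ with the surplus elements of $C$ as sources, take a shortest alternating path, argue (as in matroid intersection) that it is chordless and hence yields a legitimate simultaneous rebalancing, and show that the \emph{absence} of any such path forces, via a K\"onig-type separation, a set $S\subseteq U$ with $rank(\cd[S])<|S|/2$ --- impossible, since $\beta(\cd[S])\le\beta(\cd[U])\le 2$ (indeed $\max(\zeta(\cp[U]),\zeta(\cq[U]))\le 2$ by Theorem~\ref{fracbeta}). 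Whether a clean duality of this kind is really available is exactly the difficulty: demanding a genuine partition of $U$ into two $\cd$-sets is not a two-matroid problem --- it also imposes two \emph{dual}, spanning conditions on $C'$ --- so the intersection theorem for two matroids does not apply off the shelf. If the alternating-path route stalls, two fallbacks seem worth pursuing: induction on $|U|$, peeling off an element that lies in no relevant circuit of either matroid (e.g.\ a common coloop of the two restrictions), balancing the rest, and reinserting it on the lighter side; or balancing \emph{fractionally} first via Theorem~\ref{fracbeta} and then integralising by merging and re-splitting two support sets at a time through Theorem~\ref{moving}. In any event the small cases ($|U|\le 4$, dispatched by counting parallel classes inside $\cp$ and $\cq$), together with the $K_4$ example, both support the conjecture and confirm that the bound $\bigl||C'|-|D'|\bigr|\le 1$ cannot be strengthened to $0$.
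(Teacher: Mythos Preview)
There is nothing to compare against: the paper does not prove this statement. It appears only in the closing remark, posed as an open conjecture ``of interest on its own'' whose resolution would furnish an alternative route (balancing the support of an arbitrary fractional cover) to the truncation trick actually used in the proof of Theorem~\ref{main}. No argument, partial or otherwise, is offered for it.

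Your write-up is not a proof either, and you say so explicitly. You correctly reduce to the disjoint case, isolate the key rebalancing claim (if $|C|\ge|D|+2$ then some exchange sequence shifts one unit of size from $C$ to $D$ while keeping both sides in $\cd$), carry out the first step honestly --- the bounds $|A|,|B|\le|D|$ via $A\subseteq span_\cp(D)$ and the single exchange $D+u-v\in\cd$ are fine --- and then flag precisely the obstruction: keeping \emph{both} $C'$ and $D'$ simultaneously in $\cp$ and in $\cq$ is a four-constraint problem, not a two-matroid intersection problem, so the Edmonds augmenting-path machinery does not apply off the shelf. That diagnosis is accurate and is exactly why the statement is listed as a conjecture. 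What you have produced is a reasonable research outline for an open problem, with the difficulty located in the right place; it is not a proof, and the paper does not claim to have one either.
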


\end{document}